\documentclass[12pt]{amsart}

\usepackage[english]{babel}

\usepackage[utf8]{inputenc}

\usepackage{microtype,amsmath,amsthm,amssymb}
\usepackage{parskip}
\usepackage{geometry}
\usepackage[hidelinks]{hyperref}
\geometry{text={0.78\paperwidth, 1.05\paperwidth}}
\author{Paul Pollack} 
\address{Department of Mathematics \\ University of Georgia \\ Athens, GA 30602}
\email{pollack@uga.edu}
\thanks{P.P. is supported by NSF award DMS-2001581.}
 
\subjclass{Primary 11N37; Secondary 11A25, 11N64}

\author{Akash Singha Roy}
\address{ESIC Staff Quarters No.: D2\\ 143 Sterling Road, Nungambakkam\\Chennai 600034\\ Tamil Nadu, India.}
\email{akash01s.roy@gmail.com}

\renewcommand\phi\varphi

\renewcommand{\pod}[1]{\allowbreak\mathchoice
  {\if@display \mkern 18mu\else \mkern 8mu\fi (#1)}
  {\if@display \mkern 18mu\else \mkern 8mu\fi (#1)}
  {\mkern4mu(#1)}
  {\mkern4mu(#1)}
}
\usepackage{graphicx}
\DeclareMathAlphabet{\curly}{U}{rsfs}{m}{n}

\newtheorem{thm}{Theorem}[section]

\newtheorem{prop}[thm]{Proposition}
\newtheorem{lem}[thm]{Lemma}
\newtheorem{conj}[thm]{Conjecture}

\theoremstyle{remark}
\newtheorem{rmk}[thm]{Remark}

\title{Powerfree sums of proper divisors}

\begin{document}
\begin{abstract} Let $s(n):= \sum_{d\mid n,~d<n} d$ denote the sum of the proper divisors of $n$. It is natural to conjecture that for each integer $k\ge 2$, the equivalence
\[ \text{$n$ is $k$th powerfree} \Longleftrightarrow \text{$s(n)$ is $k$th powerfree} \]
holds almost always (meaning, on a set of asymptotic density $1$). We prove this for $k\ge 4$.
\end{abstract}

\maketitle

\section{Introduction}
A 19th century
theorem of Gegenbauer asserts that for each fixed $k$, the set of positive integers not divisible by the $k$th power of an integer larger than $1$ has asymptotic density $\zeta(k)^{-1}$, where $\zeta(s)$ is the familiar Riemann zeta function. Call such an integer \textsf{$k$th-power-free}, or \textsf{$k$-free} for short. 
In this note we investigate the frequency with which the sum-of-proper-divisors function $s(n):=\sum_{d\mid n,~d<n} d$ assumes $k$-free values. As we proceed to explain, there is a natural guess to make here, formulated below as Conjecture \ref{conj:main}.

Fix $k\ge 2$. If $n$ is not $k$-free, then $p^k \mid n$ for some prime $p$. Moreover, if $y=y(x)$ is any function tending to infinity, the the upper density of $n$ divisible by $p^k$ for some $p > y^{1/k}$ is at most $\sum_{p > y^{1/k}}p^{-k} = o(1)$. Hence, almost always a non $k$-free number $n$ is divisible by $p^k$ for some $p^k \le y$.  To be precise, when we say a statement about positive integers $n$ holds \textsf{almost always}, we mean that it holds for all $n\le x$ with $o(x)$ exceptions, as $x\to\infty$. (Importantly, we allow the statement itself to involve the growing upper bound $x$.)  

It was noticed by Alaoglu and Erd\H{o}s \cite{AE44} that whenever $y=y(x)$ tends to infinity with $x$ slowly enough, $\sigma(n)$ is divisible by all of the integers in $[1,y]$ almost always. (We give a proof below with $y:=(\log\log{x})^{1-\epsilon}$; see Lemma \ref{lem:easylem}.) Hence, almost always $n$ and $s(n)=\sigma(n)-n$ share the same set of divisors up to $y$. Putting this together with the observations of the last paragraph, we see that if $n$ is not $k$-free, then  $s(n)$ is not $k$-free, almost always. The same reasoning shows that if $n$ is $k$-free, then $s(n)$ is not divisible by $p^k$ for any $p \le y^{1/k}$, almost always. Thus, if it could be shown that almost always $s(n)$ is not divisible by $p^k$ for any prime $p > y^{1/k}$, then we would have established the following conjecture.

\begin{conj}\label{conj:main} Fix $k\ge 2$. On a set of integers $n$ of asymptotic density $1$, 
\[ \text{$n$ is $k$-free} \Longleftrightarrow \text{$s(n)$ is $k$-free}. \]
\end{conj}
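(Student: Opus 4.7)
The discussion preceding the Conjecture has reduced matters to one unproven statement: almost all $n \le x$ satisfy $p^k \nmid s(n)$ for every prime $p > y^{1/k}$. I would attack this via the first-moment bound
\[
\#\bigl\{n \le x : \exists\, p > y^{1/k},\ p^k \mid s(n)\bigr\} \,\le\, \sum_{y^{1/k} < p \,\le\, (x\log\log x)^{1/k}} \#\{n \le x : p^k \mid s(n)\},
\]
where the upper cutoff on $p$ uses $s(n) \le \sigma(n) \ll n\log\log n$. The goal is to show this double sum is $o(x)$, whereupon the $k$-free $\Rightarrow$ $k$-free direction of the Conjecture follows, completing the equivalence.

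For the inner count, I would exploit the anatomy of integers, writing $n = qm$ with $q$ a prime factor of $n$ appearing to the first power and much larger than $m$---losing only $o(x)$ exceptional integers (for instance the $x^{1/\log\log x}$-smooth numbers, which are negligibly few). A short calculation gives $s(n) = q\,s(m) + \sigma(m)$, so $p^k \mid s(n)$ becomes
\[
q\, s(m) \equiv -\sigma(m) \pmod{p^k}.
\]
For each admissible $m$, this pins $q$ to a bounded number of residues modulo $p^k/\gcd(p^k, s(m))$, and Brun--Titchmarsh controls the number of admissible primes $q \le x/m$. Summing first in $m$ and then in $p$, the decisive saving is expected from $\sum_{p > y^{1/k}} p^{-k} = o(1)$, a convergence available for every $k \ge 2$.

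The main obstacle is the contribution of $m$ for which $\gcd(s(m), p^k)$ is abnormally large, and particularly the regime $p \mid s(m)$. In that regime the congruence on $q$ loses a factor of $p$, inflating the per-prime count from the heuristic $\asymp x/p^k$ up to $\asymp x/p^{k-1}$ (and worse still when higher powers of $p$ divide $s(m)$). For $k \ge 4$ one can absorb this damage by bounding $\#\{m \le x : p^j \| s(m)\}$ crudely and invoking the convergence of $\sum_p p^{-(k-j)}$ for the relevant $j$; for $k = 2$ or $k = 3$, however, the analogous sums diverge, and overcoming this appears to require a strong uniform-in-$p$ equidistribution of $s(n) \bmod p$ that lies beyond current methods. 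This is why the approach delivers the Conjecture in full only for $k \ge 4$, matching the range asserted in the abstract.
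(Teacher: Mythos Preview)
Your sketch captures only the easy regime and misidentifies the real obstruction. The decomposition $n=qm$ with $q=P^+(n)$ and the congruence $q\,s(m)\equiv -\sigma(m)\pmod{p^k}$ is indeed how the paper begins, but it is effective only when the range for $q$ is long relative to the modulus $p^k$. Once $p^k$ is a genuine power of $x$ --- say $p^k>x^{1/2}$ --- Brun--Titchmarsh degenerates to ``at most one $q$ per residue class,'' and summing that $+1$ over the admissible $m$ and then over the $\asymp x^{1/k}/\log x$ relevant primes $p$ swamps $x$. Your phrase ``$q$ much larger than $m$'' cannot be enforced on a density-$1$ set: the integers with $P^+(n)\le\sqrt n$ already form a set of positive density, so one cannot take $m<\sqrt x$ without discarding a positive proportion of $n$; yet if $m$ ranges up to $x^{1-o(1)}$ the $+1$ term is fatal. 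Even the main term $\sum_m x/(mp^k)$ acquires a factor $\log x$ from the harmonic sum, and the tail $\sum_{p>y^{1/k}}p^{-k}$ with $y=(\log_2 x)^{0.9}$ is nowhere near $o(1/\log x)$.

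The paper handles the large range $p^k>x^{1/2\log_3 x}$ by a genuinely different device, which is its main innovation. One writes $n=AB$ with $A$ a squarefree unitary divisor of $n/P^+(n)$ chosen minimal subject to $A>d^{1/4}\exp\bigl(c\,\log x/\log_2 x\bigr)$; a short argument using the gap between $P^+(n)$ and $P_2^+(n)$ forces $\sigma(A)<d^{1/2}$. The condition $d\mid s(n)$ becomes $\sigma(A)/A\equiv B/\sigma(B)\pmod d$, and because $\sigma(A_i)<d^{1/2}$ any two solutions $A_1,A_2$ for the same $B$ must satisfy $\sigma(A_1)/A_1=\sigma(A_2)/A_2$ \emph{exactly}. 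Wirsing's theorem bounding $\#\{m\le x:\sigma(m)/m=\alpha\}$ then caps the number of $A$ per $B$, and one obtains $\#\{n\le x: d\mid s(n),\ n\notin\mathcal E\}\ll x/(d^{1/4}\log x)$ uniformly in $d$. It is this exponent $1/4$ --- not the $\gcd(s(m),p^k)$ issue you flag --- that produces the restriction $k\ge 4$: one needs $\sum_p p^{-k/4}$ over $p\le x^{2/k}$ to be $o(\log x)$.
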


The case $k=2$ of Conjecture \ref{conj:main} is alluded to by Luca and Pomerance in \cite{LP15} (see Lemma 2.2 there and the discussion following). Their arguments show that $s(n)$ is squarefree on a set of positive lower density (in fact, of lower density at least $\zeta(2)^{-1} \log{2}$). Conjecture \ref{conj:main}, for every $k \ge 2$, would follow from a very general conjecture of Erd\H{o}s--Granville--Pomerance--Spiro \cite{EGPS90} that the $s$-preimage of a density zero set also has density zero; see Remark \ref{rmk:EGPS} below.

Our result is as follows.

\begin{thm} \label{thm:main} Conjecture \ref{conj:main} holds for each $k\ge 4$.
\end{thm}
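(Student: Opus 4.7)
The argument in the introduction reduces Theorem~\ref{thm:main} to the estimate
\[
\sum_{p > T} N_p(x) = o(x), \qquad T := y^{1/k} = (\log\log x)^{(1-\epsilon)/k},
\]
where $N_p(x) := \#\{n \le x : p^k \mid s(n)\}$, since the complement of the set in question is contained in the union of $\{n \le x : p^k \mid s(n)\}$ over $p > T$.

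My plan is to bound $N_p(x)$ for each $p$ via the standard decomposition $n = qm$ with $q = P^+(n)$ the largest prime factor of $n$ and $\gcd(q, m) = 1$; this is valid for almost all $n \le x$. A short computation yields $s(n) = q\, s(m) + \sigma(m)$, so in the main case $p \nmid n$ the condition $p^k \mid s(n)$ becomes the linear congruence
\[
q \cdot s(m) \equiv -\sigma(m) \pmod{p^k}.
\]
When $\gcd(s(m), p) = 1$ this pins $q$ to a single residue class modulo $p^k$, and Brun--Titchmarsh bounds the count of primes $q \le x/m$ in that class by $\ll (x/m)/(p^k \log(x/(mp^k)))$. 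The exceptional sub-case $p \mid s(m)$, which has density $\ll 1/p$ in $m$, I would handle by iterating the decomposition---peeling off a further large prime factor from $m$---and tracking the gain in the modulus. For $p$ approaching the trivial upper bound $p \ll (x\log\log x)^{1/k}$ (coming from $p^k \le s(n) \le x \log\log x$), a separate direct argument handles the contribution: the admissible values $s(n) = p^k \ell$ are then few in number, and one bounds $\#s^{-1}(v)$ using standard estimates from the anatomy of integers.

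The main obstacle is controlling the logarithmic losses that accumulate in this scheme, from both the $\log(x/(mp^k))$ denominator in Brun--Titchmarsh and the sum $\sum_m 1/m$, which can be as large as $\log x$ over admissible $m$. These losses must be absorbed by the decay of $\sum_{p > T} 1/p^k \ll 1/T^{k-1}$, and since $T$ grows only as slowly as $(\log\log x)^{(1-\epsilon)/k}$, the margin is tight. The hypothesis $k \ge 4$ is presumably what provides enough room, in conjunction with a careful split of the range of $p$ into small, moderate, and large regimes, each treated by a tailored variant of the above; for $k = 2, 3$ the sum $\sum_{p > T} 1/p^k$ decays too slowly to overcome these logarithmic costs, which is why those cases remain open.
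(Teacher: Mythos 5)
Your opening reduction and the decomposition $n=qm$ with $q=P^+(n)$, leading to the congruence $q\,s(m)\equiv -\sigma(m)\pmod{p^k}$, match the paper's treatment of the case where $P^+(n)$ is large. But there is a genuine gap in the remainder of the sketch, and it is precisely the part the paper identifies as its main innovation.

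The Brun--Titchmarsh step is only useful when $P^+(n)=q$ ranges over an interval of length comparable to (or longer than) the modulus $p^k$; when $P^+(n)$ is \emph{small} relative to $p^{k/4}$ or so, the congruence in $q$ gives essentially no saving, and neither of your two escape routes is available. ``Iterating the decomposition'' to peel off further large primes runs into the same problem at each step, with no evident gain in modulus, and ``bounding $\#s^{-1}(v)$'' for large $p$ has no unconditional support: no nontrivial pointwise bound on the number of $s$-preimages is known (this is closely tied to notoriously hard problems about perfect and multiperfect numbers), so that step cannot be carried out with ``standard estimates.'' The paper instead handles the small-$P^+(n)$ regime by a completely different mechanism: it writes $n=AB$ for a carefully chosen unitary squarefree divisor $A$, rewrites $p^k\mid s(n)$ as a congruence $\sigma(A)/A\equiv B/\sigma(B)\pmod{p^k}$, shows that all admissible $A$ for a fixed $B$ share the same value of $\sigma(A)/A$, and then invokes Wirsing's theorem (Lemma~\ref{Wirsing}) that the number of $m\le x$ with $\sigma(m)/m$ equal to a fixed value is at most $\exp(\lambda_0\log x/\log_2 x)$. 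This subexponential bound is what defeats the logarithmic losses you worry about, and it is absent from your proposal. Also, the exceptional case $\gcd(s(m)\sigma(m),p^k)>1$ is not handled by iteration in the paper but is ruled out a priori by conditioning on $P^+(\gcd(n,\sigma(n)))\le\log_2 x$ (Lemma~\ref{gcd(n,sigma(n)) log log x smooth}), which is cleaner than what you propose.

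Finally, your diagnosis of the role of $k\ge 4$ is off. The constraint does not come from slow decay of $\sum_{p>T}p^{-k}$ near $T=y^{1/k}$; for $p^k\le x^{1/2\log_3 x}$ a divisor-sum lemma (Lemma~\ref{lem:pollack}) already gives $O(x/p^{0.9k})$ and any $k\ge 2$ works there. The bottleneck is the large range $p^k>x^{1/2\log_3 x}$, where Theorem~\ref{thm:divthm} gives a count $\ll x/(d^{1/4}\log x)$; with $d=p^k$ this is $\ll x/(p^{k/4}\log x)$, and summing over primes $p$ requires $k/4\ge 1$, i.e.\ $k\ge 4$. The exponent $1/4$ is itself forced by the interplay between the length of the interval for $A$ and the loss from Wirsing's bound.
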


To prove Conjecture \ref{conj:main} for a given $k$, it is enough (by the above discussion) to show that almost always $s(n)$ is not divisible by $p^k$ for any $p^k > (\log\log{x})^{0.9}$. The range $p \le x^{o(1)}$ can be treated quickly using familiar arguments (versions of which appear, e.g., in \cite{pollack14}). The main innovation in our argument --- and the source of the restriction to $k\ge 4$ --- is the handling of larger $p$ using a theorem of Wirsing \cite{wirsing59} that bounds the ``popularity'' of values of the function $\sigma(n)/n$.

The reader interested in other work on powerfree values of arithmetic functions may consult \cite{pappalardi03,PSS03,BL05,BP06} as well as the survey \cite{pappalardi05}.
\subsection*{Notation and conventions} We reserve the letters $p, q, P$, with or without subscripts, for primes and we write $\log_k$ for the $k$th iterate of the natural logarithm. We write $P^{+}(n)$ and $P^{-}(n)$ for the largest and smallest prime factors of $n$, with the conventions that $P^{+}(1)=1$ and $P^{-}(1)=\infty$. We adopt the Landau--Bachmann--Vinogradov notation from asymptotic analysis, with all implied constants being absolute unless specified otherwise. 

\section{Preliminaries}
The following lemma is due to Pomerance (see \cite[Theorem 2]{pomerance77}).

\begin{lem}\label{lem:pomerance} Let $a, k$ be integers with $\gcd(a,k)=1$ and $k > 0$. Let $x\ge 3$. The number of $n\le x$ for which there does not exist a prime $p\equiv a\pmod{k}$ for which $p\parallel n$ is $O(x (\log{x})^{-1/\phi(k)})$.
\end{lem}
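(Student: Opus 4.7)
The plan is to reduce the question to counting squarefree integers whose prime factors all avoid the class $a\pmod k$, and then apply a standard mean-value bound for nonnegative multiplicative functions. Every positive integer $n$ admits a unique decomposition $n=st$ with $s$ squarefree, $t$ squarefull (meaning $p\mid t\Rightarrow p^2\mid t$, with $t=1$ allowed), and $\gcd(s,t)=1$. The primes $p$ with $p\parallel n$ are precisely the prime divisors of $s$, so $n$ is exceptional (in the sense of the lemma) if and only if $s$ has no prime factor in the residue class $a\pmod k$. Hence the count of exceptional $n\le x$ is at most $\sum_{t\leq x,~t\text{ squarefull}} N(x/t)$, where $N(y)$ denotes the number of squarefree $s\le y$ with no prime factor $\equiv a\pmod k$.

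The crucial input is the bound $N(y)\ll_k y/(\log y)^{1/\phi(k)}$. For this I would invoke an upper bound of Hall--Tenenbaum type (or equivalently Shiu's mean-value theorem), which for the nonnegative multiplicative function $\mathbf{1}_{\{s\text{ squarefree, no prime factor}\equiv a\pmod k\}}$ yields
\[
N(y)\;\ll\;\frac{y}{\log y}\prod_{\substack{p\leq y\\ p\not\equiv a\pmod k}}\left(1+\frac{1}{p}\right).
\]
Combined with the Mertens-type estimate $\sum_{p\leq y,~p\not\equiv a\pmod k}1/p=(1-1/\phi(k))\log_2 y+O_k(1)$ (an immediate consequence of Dirichlet's theorem on primes in progressions), the product is $\ll_k (\log y)^{1-1/\phi(k)}$, giving the claimed bound on $N(y)$.

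To conclude, I would split the sum over squarefull $t$ at $t=\sqrt{x}$. For $t\leq\sqrt{x}$, one has $\log(x/t)\geq\tfrac12\log x$, so $N(x/t)\ll_k(x/t)(\log x)^{-1/\phi(k)}$; since $\sum_{t\text{ squarefull}}1/t$ converges (by partial summation from the standard count $\#\{t\leq y:t\text{ squarefull}\}\ll\sqrt{y}$), this range contributes $O_k(x(\log x)^{-1/\phi(k)})$. The tail $t>\sqrt{x}$, handled by the trivial bound $N(x/t)\leq x/t$ together with partial summation, contributes only $O(x^{3/4})$, which is absorbed into the stated error term.

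The main obstacle is the mean-value bound on $N(y)$: it is the step that converts the density $1/\phi(k)$ of the forbidden residue class into the saving $(\log y)^{-1/\phi(k)}$, and where the analytic information from Dirichlet's theorem actually enters. Once this is in hand, the remainder of the argument is routine bookkeeping, and no delicate sieving is required because the decomposition $n=st$ already isolates the relevant "exactly divides" structure.
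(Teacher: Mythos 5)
The paper does not prove this lemma; it is quoted directly from Pomerance's amicable numbers paper (\cite[Theorem 2]{pomerance77}), so there is no internal proof to compare against. That said, your proposal is a plausible and essentially correct route to such a result, and is close in spirit to the argument one would naturally give: isolate the unitary squarefree part $s$ of $n$, observe that the exceptional condition is a condition on $s$ alone, and bound the count of admissible $s$ by a Shiu/Hall--Tenenbaum-type mean-value estimate. The decomposition $n=st$ and the splitting of the sum over squarefull $t$ at $\sqrt{x}$ are handled correctly, and the $O(x^{3/4})$ tail is indeed always absorbed since $x^{1/4}\geq(\log x)^{1/\phi(k)}$ for $x\geq 3$.

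The one genuine gap concerns uniformity in $k$. Under the paper's stated conventions, implied constants are absolute unless otherwise indicated, and the application in Lemma~\ref{lem:easylem} takes the modulus as large as $d\leq(\log_2 x)^{1-\epsilon}$, so $k$ must be allowed to grow with $x$. Your argument, as written, carries $\ll_k$ and $O_k(1)$ at exactly the decisive step: the Mertens-type estimate $\sum_{p\leq y,\,p\not\equiv a\;(\mathrm{mod}\;k)}1/p=(1-1/\phi(k))\log_2 y+O_k(1)$. Naively, the constant there can be as large as $\log_2 k$ or so (primes below $k$ contribute a $\log_2 k$ to the left side before the density $1-1/\phi(k)$ has a chance to assert itself), and obtaining an error term uniform in $k$ requires something beyond ``an immediate consequence of Dirichlet's theorem'' --- one needs a uniform Mertens theorem in progressions (Norton's estimates, or the careful elementary treatment in Pomerance's original paper). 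The Shiu/Hall--Tenenbaum step itself is fine with absolute constants, since your multiplicative function is bounded by $1$; the entire $k$-dependence lives in the Mertens sum, and that is precisely where the cited source does the real work. As it stands, your proposal establishes the lemma for each fixed $k$, which is weaker than what the paper uses.
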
 

The following lemma justifies the claim in the introduction that $\sigma(n)$ is usually divisible by all small primes. It is well-known but, for lack of a suitable reference, we include the short proof.

\begin{lem}\label{lem:easylem} Fix $\epsilon > 0$. Almost always, the number $\sigma(n)$ is divisible by every positive integer $d\le (\log_2 x)^{1-\epsilon}$. 
\end{lem}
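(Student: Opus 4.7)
The plan is to reduce divisibility by every $d \le Y := (\log_2 x)^{1-\epsilon}$ to divisibility by each prime power $q^a \le Y$, and then apply Lemma \ref{lem:pomerance} for each such prime power.

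First, I would observe that if $d = \prod_i q_i^{a_i} \le Y$ is the factorization of $d$ into distinct prime powers, then each $q_i^{a_i} \le d \le Y$, and since the $q_i^{a_i}$ are pairwise coprime, $d\mid \sigma(n)$ is implied by $q_i^{a_i}\mid \sigma(n)$ for every $i$. Hence it suffices to show that for each prime power $q^a \le Y$, we have $q^a\mid \sigma(n)$ for all $n\le x$ outside a set of size $o(x/Y)$, and then take a union bound over the (at most $Y$-many) prime powers $q^a \le Y$.

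The key observation is that whenever a prime $p\equiv -1\pmod{q^a}$ satisfies $p\parallel n$, then $\sigma(p) = p+1$ divides $\sigma(n)$ (by multiplicativity of $\sigma$), and so $q^a \mid \sigma(n)$. Applying Lemma \ref{lem:pomerance} with modulus $q^a$ and residue class $-1\bmod{q^a}$ (which is coprime to $q^a$), the number of $n \le x$ for which no such prime $p$ exists with $p\parallel n$ is
\[
O\!\left(x(\log x)^{-1/\phi(q^a)}\right) \le O\!\left(x\exp\!\left(-\frac{\log_2 x}{Y}\right)\right) = O\!\left(x\exp\!\left(-(\log_2 x)^{\epsilon}\right)\right),
\]
since $\phi(q^a) \le q^a \le Y = (\log_2 x)^{1-\epsilon}$.

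Summing over the at most $Y = (\log_2 x)^{1-\epsilon}$ prime powers $q^a \le Y$, the total exceptional count is
\[
O\!\left(x\cdot (\log_2 x)^{1-\epsilon}\exp\!\left(-(\log_2 x)^{\epsilon}\right)\right) = o(x),
\]
completing the proof. There is no real obstacle here — the argument is a clean application of Pomerance's lemma combined with multiplicativity of $\sigma$; the only mild subtlety is that the saving $\exp(-(\log_2 x)^\epsilon)$ from Lemma \ref{lem:pomerance} easily dominates the $Y$-fold loss from the union bound precisely because $Y$ is a power of $\log_2 x$ strictly below the first power.
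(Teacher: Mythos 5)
Your proof is correct and uses the same core idea as the paper: apply Lemma \ref{lem:pomerance} with residue class $-1$ to produce a prime $p\parallel n$ with $p\equiv -1\pmod{m}$, forcing $m\mid\sigma(n)$, then take a union bound over $m\le (\log_2 x)^{1-\epsilon}$. The only cosmetic difference is that you first reduce to prime-power moduli $q^a$, whereas the paper applies the lemma directly to every $d\le (\log_2 x)^{1-\epsilon}$; both yield the same $O\bigl(x(\log_2 x)^{1-\epsilon}\exp(-(\log_2 x)^\epsilon)\bigr)=o(x)$ bound.
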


\begin{proof} Notice that $d\mid \sigma(n)$ whenever there is a prime $p\equiv -1\pmod{d}$ such that $p \parallel n$. For each $d\le (\log_2 x)^{1-\epsilon}$, the number of $n\le x$ for which there is no such $p$ is $O(x \exp(-(\log_2 x)^{\epsilon}))$, by Lemma \ref{lem:pomerance}. Now sum on $d\le (\log_2 x)^{1-\epsilon}$.
\end{proof}


Our next lemma bounds the number of $n \leq x$ for which $n$ and $\sigma(n)$ possess a large common prime divisor.

\begin{lem}\label{gcd(n,sigma(n)) log log x smooth} Almost always, the greatest common divisor of $n$ and $\sigma(n)$ has no prime divisor exceeding $\log \log x$.
\end{lem}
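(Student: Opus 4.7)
The plan is to count the $n\le x$ for which some prime $p>\log_2 x$ divides $\gcd(n,\sigma(n))$ and show this count is $o(x)$. First, the case $p^2\mid n$ for some such $p$ contributes at most $x\sum_{p>\log_2 x}p^{-2}\ll x/(\log_2 x\,\log_3 x)=o(x)$. So I may restrict to $n$ in which every $p>\log_2 x$ dividing $n$ satisfies $p\parallel n$. Writing $n=pm$ with $p\nmid m$, the relation $\sigma(n)=(p+1)\sigma(m)$ reduces $p\mid\sigma(n)$ to $p\mid\sigma(m)$, which forces $p\mid\sigma(q^a)$ for some prime power $q^a\parallel m$ (necessarily with $q\ne p$). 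I then split on the size of $a$.

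If $a=1$, then $p\mid q+1$, so $q\equiv -1\pmod{p}$, and the bad $n$ all have $pq\mid n$. I would bound
\[
\sum_{p>\log_2 x}\;\sum_{\substack{q\le x/p\\q\equiv -1\,(p)}}\frac{x}{pq}
\]
by using Brun--Titchmarsh and partial summation to estimate the inner sum as $\ll(\log_2 x)/\phi(p)$ for $p\le x^{1/2}$; for $p>x^{1/2}$ the inner sum is empty, since any prime $q\equiv -1\pmod{p}$ satisfies $q\ge p-1>x/p$. This yields $\ll x\,(\log_2 x)\sum_{p>\log_2 x}p^{-2}\ll x/\log_3 x=o(x)$.

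If $a\ge 2$, then for each pair $(q,a)$ the bound $\sigma(q^a)\le 2q^a$ shows that $\sigma(q^a)$ has at most $\log(2q^a)/\log_3 x$ prime divisors exceeding $\log_2 x$, each contributing at most $1/\log_2 x$ to the $1/p$ sum. Hence
\[
\sum_{\substack{p>\log_2 x\\p\mid\sigma(q^a)}}\frac{x}{pq^a}\ll\frac{x\,(a\log q+1)}{q^a\,\log_2 x\,\log_3 x}.
\]
Summing over primes $q$ and integers $a\ge 2$, the remaining double sum converges (using $\sum_q\log q/q^a\ll 2^{-a}$ for $a\ge 2$), giving a total of $O(x/(\log_2 x\,\log_3 x))=o(x)$.

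The main obstacle is the $a=1$ case: there the admissible $q$ form an entire arithmetic progression modulo $p$, so one needs the uniform Mertens-type estimate $\sum_{q\le Z,\,q\equiv -1\,(p)}1/q\ll(\log_2 Z)/\phi(p)$ (valid for $Z\ge p^{1+\epsilon}$, uniformly in $p$) to extract the final saving of $1/\log_3 x$ required to conclude $o(x)$.
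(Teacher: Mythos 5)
Your proposal is correct, and at its core it runs along the same lines as the paper's argument: reduce $p \mid \gcd(n,\sigma(n))$ for a prime $p>\log_2 x$ to $p\mid q+1$ for some prime $q$ with $pq\mid n$, then estimate the resulting double sum over $(p,q)$ by Brun--Titchmarsh and partial summation. The one genuine difference is in the preliminary reduction. You discard the $n$ with $p^2\mid n$ for some $p>\log_2 x$; since this does not preclude a \emph{small} prime $q$ from occurring in $n$ to a high power $a\ge 2$, you must separately estimate the contribution from $p\mid\sigma(q^a)$ with $a\ge 2$, which you do correctly by bounding the number of prime factors of $\sigma(q^a)$ exceeding $\log_2 x$ by $\log(2q^a)/\log_3 x$ and then summing the convergent series over $q$ and $a$. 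The paper instead discards the $n$ whose \emph{squarefull part} exceeds $\tfrac12\log_2 x$ (still only $O(x/\sqrt{\log_2 x})=o(x)$ integers $n\le x$), which forces $e=1$ at once: if $p\mid\sigma(q^e)$ with $p>\log_2 x$ and $q^e\parallel n$, then $q^e>\sigma(q^e)/2>\tfrac12\log_2 x$, so $e\ge 2$ would contradict the cut. This eliminates your $a\ge 2$ case in one stroke. Both routes are valid; the paper's preprocessing is a touch more economical, while yours avoids mentioning the squarefull part of $n$ at the cost of one extra (but easy) estimate.
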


With more effort, it could be shown that $\gcd(n,\sigma(n))$ is almost always the largest divisor of $n$ supported on primes not exceeding $\log\log{x}$. Compare with Theorem 8 in \cite{ELP08}, which is the corresponding assertion with $\sigma(n)$ replaced by $\phi(n)$.

\begin{proof} Put $y:=\log_2 x$. We start by removing those $n\le x$ with squarefull part exceeding $\frac{1}{2}y$. The number of these $n$ is $O(xy^{-1/2})$, which is $o(x)$ and hence negligible. 

Suppose that $n$ survives and there is a prime $p > y$ dividing $n$ and $\sigma(n)$. Since $p \mid \sigma(n)$, we can choose a prime power $q^e \parallel n$ for which $p \mid \sigma(q^e)$. Then $y < p \leq \sigma(q^e) < 2q^e$, forcing $e=1$. Hence, $p\mid \sigma(q)=q+1$ and $q \equiv -1 \pmod p$. Since $pq\mid n$, we deduce that the number of $n$ belonging to this case is at most
\[ \sum_{p > y} \sum_{\substack{q \equiv -1\pmod{p} \\ q\le x}} \frac{x}{pq} \ll x\sum_{p > y} \frac{1}{p} \sum_{\substack{q \le x \\ q\equiv -1\pmod{p}}} \frac{1}{q} \ll x\log_2 x\sum_{p > y} \frac{1}{p^2} \ll \frac{x\log_ 2 x}{y\log y} = \frac{x}{\log_3 x}, \]
which is again $o(x)$. Here the sum on $q$ has been estimated by the Brun--Titchmarsh inequality (see, e.g., Theorem 416 on p.\ 83 of \cite{tenenbaum15}) and partial summation. \end{proof}

The next lemma bounds the number of $n\le x$ with two  large prime factors that are multiplicatively close.
\begin{lem}\label{MultiplicativelyCloseLargestandSecondLargestPrimeDivisors} For all large $x$, the number of $n\le x$ divisible by a pair of primes $q_1, q_2$ with
\[ x^{1/10\log_3 x} < q_1 \leq x\quad\text{and}\quad q_1 x^{-1/(\log_3 x)^2} \leq q_2 \leq q_1 \]
is $O(x/\log_3 x)$.
\end{lem}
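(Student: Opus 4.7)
The plan is to bound the count by the standard device: any $n\le x$ divisible by distinct primes $q_1, q_2$ contributes at most $x/(q_1 q_2)$ to the cardinality, so the total is at most
\[ x \sum_{q_1} \frac{1}{q_1} \sum_{q_2} \frac{1}{q_2}, \]
where $q_1, q_2$ range over the prime pairs described in the statement.

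For each fixed $q_1$ in the allowed range, I would write $q_1 = x^\alpha$ with $\alpha \in (1/(10\log_3 x), 1]$ and estimate the inner sum on $q_2 \in [q_1 x^{-1/(\log_3 x)^2}, q_1)$ using Mertens' theorem in the form $\sum_{y < q \le z} 1/q = \log\log z - \log\log y + O(1/\log y)$. The main term becomes
\[ \log\log q_1 - \log\log\bigl(q_1 x^{-1/(\log_3 x)^2}\bigr) = \log \frac{\alpha}{\alpha - (\log_3 x)^{-2}}. \]
Since $\alpha \ge 1/(10\log_3 x)$ forces $(\log_3 x)^{-2}/\alpha \le 10/\log_3 x = o(1)$, a Taylor expansion gives an inner sum of size $O((\log_3 x)^{-2}/\alpha)$, and the Mertens error term $O\bigl(1/\log(q_1 x^{-1/(\log_3 x)^2})\bigr)$ is swallowed by the main term, since $(\log_3 x)^{-2}$ exceeds $1/\log x$ by a huge factor.

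Summing over $q_1$ then reduces the problem to bounding $\sum_{a < q_1 \le x} 1/(q_1 \log q_1)$ with $a = x^{1/(10\log_3 x)}$. Partial summation with Mertens yields $O(1/\log a) = O(\log_3 x/\log x)$. Combining with the factors already pulled out, the total bound becomes
\[ x \cdot (\log_3 x)^{-2} \cdot \log x \cdot \frac{\log_3 x}{\log x} = O\!\left(\frac{x}{\log_3 x}\right), \]
which matches the claim.

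The only real subtlety --- and therefore the main thing to check carefully --- is that the ``multiplicative closeness'' truly produces a factor of $1/(\log_3 x)^2$ at the inner-sum stage. This requires $\alpha$ to stay safely above $(\log_3 x)^{-2}$, which is precisely what the lower bound $q_1 > x^{1/(10\log_3 x)}$ buys us, and it requires the Mertens error term to be dominated, which again comes down to comparing $(\log_3 x)^{-2}$ with $1/\log x$. Beyond this bookkeeping, no further ideas seem necessary.
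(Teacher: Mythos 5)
Your proof is correct and follows essentially the same route as the paper's: bound the count by $x\sum_{q_1}q_1^{-1}\sum_{q_2}q_2^{-1}$, estimate the inner sum by Mertens to get a factor of size $\log x/((\log q_1)(\log_3 x)^2)$ (equivalently your $(\log_3 x)^{-2}/\alpha$), and then sum over $q_1$ by partial summation. The substitution $q_1 = x^\alpha$ is just a notational repackaging of the same calculation.
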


\begin{proof} The number of such $n$ is at most  $x\sum_{x^{1/10\log_3 x}< q_1 \leq x} \frac1{q_1} \sum_{ q_1 x^{-1/(\log_3 x)^2} \leq q_2 \leq q_1} \frac1{q_2}$.
By Mertens' theorem, the inner sum is 
\begin{align*} \ll \log\left(\frac{\log{(q_1)}}{\log{(q_1 x^{-1/(\log_3 x)^2})}}\right) + \frac{1}{\log{(q_1 x^{-1/(\log_3 x)^2}})} \ll \frac{\log{x}}{(\log{q_1}) (\log_3{x})^2},
\end{align*}
leading to an upper bound for our count of $n$ of
\begin{equation*}  \ll \frac{x\log x}{(\log_3 x)^2} \sum_{x^{1/10\log_3 x} < q_1 \leq x} \frac1{q_1 \log q_1}  \ll \frac{x\log x}{(\log_3 x)^2} \cdot \frac{\log_3 x}{\log x} = \frac x{\log_3 x}. \end{equation*}
Here the final sum has been estimated by the prime number theorem and partial summation. \end{proof}

We conclude this section by quoting the main result  of \cite{wirsing59}.
\begin{lem}[Wirsing]\label{Wirsing}
    There exists an absolute constant $\lambda_0>0$ such that
    $$\#\left\{ m \leq x : \frac{\sigma(m)}m = \alpha \right\} \leq \exp\left(\lambda_0 \frac{\log x}{\log \log x} \right)$$
 for all $x \geq 3$ and all real numbers $\alpha$.
\end{lem}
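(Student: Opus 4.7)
First I would note that $\sigma(m)/m$ is always rational, so we may assume $\alpha = a/b$ with $\gcd(a,b)=1$ (and $\alpha \ge 1$). The basic quantitative input is that every $m \leq x$ has at most $K := (1+o(1))\log x/\log_2 x$ distinct prime factors, since the primorial $\prod_{p \le p_K} p$ already exceeds $x$. If each solution can be encoded by a sequence of length at most $K$ in which every entry admits only boundedly many options, then the total count is $\exp(O(K)) = \exp(O(\log x / \log_2 x))$, which is the required bound.

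The plan is to enumerate solutions by peeling off prime-power divisors from the largest prime down. Having committed to the top $j$ prime powers $p_1^{e_1}, \ldots, p_j^{e_j}$ (ordered by $p_1 > \cdots > p_j$), the remaining factor $m' = m/\prod_{i \le j}p_i^{e_i}$ must satisfy $P^{+}(m') < p_j$, the size constraint $m' \le x/\prod_{i\le j}p_i^{e_i}$, and the evolving ratio condition
\[ \frac{\sigma(m')}{m'} = \alpha \prod_{i=1}^{j}\frac{p_i^{e_i}}{\sigma(p_i^{e_i})} =: \beta_j. \]
The task at each stage is to bound the number of candidate pairs $(p_{j+1},e_{j+1})$ that can legitimately be extended to a solution.

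The heart of the argument, and the main obstacle, will be to show that this per-stage branching is $O(1)$ on average along the descent, so that the $K$ levels together yield at most $\exp(O(K))$ sequences. A natural lever is the rational identity $\sigma(p^e)/p^e = (1 + p + \cdots + p^e)/p^e$, which in lowest terms has numerator coprime to $p$: writing $\beta_j = c_j/d_j$ in lowest terms, every prime dividing $1 + p_{j+1} + \cdots + p_{j+1}^{e_{j+1}}$ must either already appear in $c_j d_j$ or be cancelled by the denominator of some later factor $p_\ell^{e_\ell}$ with $\ell > j+1$. Combined with the recursion $d_{j+1} \mid d_j \cdot p_{j+1}^{e_{j+1}}$ and a careful bookkeeping of how the prime supports of $c_j$ and $d_j$ evolve, this should force the set of admissible $(p_{j+1},e_{j+1})$ to be controllable at each step. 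Making the "$O(1)$ on average" claim precise --- tracking exactly how many primes can be legitimately "introduced for later cancellation" without blowing up the branching --- is the delicate analytic/combinatorial point where Wirsing's original argument (or a refined variant) would be essential.
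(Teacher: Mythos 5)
The paper does not prove this lemma: it is quoted, as stated, from Wirsing's 1959 paper, so there is no in-paper argument to measure yours against. Judged on its own, your sketch identifies the correct overall shape --- $\sigma(m)/m$ is rational, $m\le x$ has $O(\log x/\log_2 x)$ distinct prime-power components, and one wants to build $m$ up one prime power at a time with controlled branching --- but it leaves a genuine gap precisely where you flag one. The claim that the ``per-stage branching is $O(1)$ on average'' is the whole content of the theorem, and your text concedes it by deferring to ``Wirsing's original argument (or a refined variant).'' As written, this is circular: the proposal invokes the result it is meant to establish.

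Two concrete points suggest the outline is also aimed slightly off target. First, you peel off the largest prime factor of $m$, but Wirsing's leverage comes from peeling off the largest prime of the \emph{reduced denominator} $d_j$ of $\beta_j$: since $\sigma(m')/m' = c_j/d_j$ with $\gcd(c_j,d_j)=1$ forces $d_j \mid m'$, that prime is \emph{forced} and only its exponent is free. Letting the next prime be whatever $P^+(m')$ happens to be forfeits this rigidity. Second, even with the prime forced, the exponent can range over up to $\lfloor \log x/\log p \rfloor$ values, which is far from $O(1)$ for small $p$, so the naive ``boundedly many options per entry $\Rightarrow \exp(O(K))$'' count does not close; the real argument must weigh the logarithmic budget $\log x$ against $\log p_j$ across stages. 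Finally, a small slip: since $\gcd(p^e,\sigma(p^e))=1$, the denominator recursion should read $d_{j+1}\mid d_j\,\sigma(p_{j+1}^{e_{j+1}})$, not $d_{j+1}\mid d_j\,p_{j+1}^{e_{j+1}}$ (the latter governs the numerators $c_j$). In short, the skeleton is plausible, but the load-bearing combinatorial estimate is asserted rather than proved.
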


\section{Proof of Theorem \ref{thm:main}}
As discussed in the introduction, it is enough to establish the following proposition. From now on, $y:=(\log\log{x})^{0.9}$.

\begin{prop}\label{prop:main} Fix $k\ge 4$. Almost always, $s(n)$ is not divisible by $p^k$ for any $p^k > y$. \end{prop}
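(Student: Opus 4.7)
The plan is to bound the count of $n \le x$ for which some prime power $p^k > y$ divides $s(n)$ by $o(x)$. By a union bound, this reduces to showing
\[
\sum_{\substack{p \text{ prime} \\ p^k > y}} \#\{n \le x : p^k \mid s(n)\} = o(x).
\]
Before splitting the range of $p$, I would carry out standard reductions to restrict attention to typical $n$: discard those $n\le x$ whose squarefull part exceeds $y$ (contributing $O(x/\sqrt{y})$), those for which $\gcd(n,\sigma(n))$ admits a prime factor exceeding $\log_2 x$ (bounded by Lemma \ref{gcd(n,sigma(n)) log log x smooth}), those having two large prime factors that are multiplicatively close in the sense of Lemma \ref{MultiplicativelyCloseLargestandSecondLargestPrimeDivisors}, and those whose largest prime $P := P^{+}(n)$ fails $P > x^{1/\log_3 x}$ (a classical smooth-number estimate). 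For the surviving $n$ we write $n = mP$ with $P\nmid m$ (and when possible $P>m$), yielding the key identity
\[
s(n) = P\,s(m) + \sigma(m).
\]

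For the \emph{small} range $y^{1/k} < p \le x^{\epsilon}$ (with $\epsilon > 0$ small), the divisibility $p^k \mid s(n)$ --- on the typical set where $p \nmid s(m)$ --- becomes the congruence $P \equiv -\sigma(m)\,s(m)^{-1}\pmod{p^k}$, pinning $P$ to a single residue class. Brun--Titchmarsh applied to $\pi(x/m;\,p^k,\cdot)$ and summed over $m$ then yields $\#\{n\le x:p^k\mid s(n)\} \ll x/p^k$ up to acceptable logarithmic factors; summing over $p$ contributes $\ll x\sum_{p^k > y}p^{-k} = o(x)$. The contingency $p\mid s(m)$ forces $p\mid \gcd(m,\sigma(m))$, hence $p \le \log_2 x$ after the reductions, a case handled by a direct divisibility count.

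For the \emph{large} range $x^{\epsilon} < p \le (x\log_2 x)^{1/k}$ (the upper cutoff reflecting $s(n) = O(x\log_2 x)$), Brun--Titchmarsh degenerates once $p^k$ approaches $x/m$, so here I would invoke Wirsing's theorem (Lemma \ref{Wirsing}). Rewriting $p^k\mid s(n)$ as $(P+1)\sigma(m)\equiv Pm\pmod{p^k}$ pins down $\sigma(m)/m$ modulo $p^k$ once $P$ is fixed, and by Wirsing each rational value of $\sigma(m)/m$ is attained by at most $\exp(\lambda_0\log x/\log_2 x) = x^{o(1)}$ integers $m\le x$. Combining this sparsity with the trivial bound $m\le\sqrt{x}$ coming from $P>m$, together with the at-most-one-$P$-per-residue-class bound valid whenever $p^k > x/m$, should give a total large-range contribution of the order $x^{1-\delta}$ for some $\delta = \delta(k) > 0$ once $k \ge 4$.

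The main obstacle is unquestionably the large-prime range. Brun--Titchmarsh cannot handle $p^k$ that approach $x$, while Wirsing's bound, being only $x^{o(1)}$ per rational value of $\sigma(m)/m$, must be combined carefully with both the modular constraint coming from $p^k\mid s(n)$ and a combinatorial count over admissible pairs $(m,P)$. The threshold $k\ge 4$ emerges precisely from balancing these budgets; for $k \le 3$ the accounting appears to just barely fail to close. A secondary subtlety is that the $n$ with $P^{+}(n)\le\sqrt{n}$ form a positive-density set and so do not fit directly into the $n=mP$ parametrization with $P>m$; handling them requires either a supplementary (less balanced) factorization or a separate treatment, which I would fold into the reduction step at the outset.
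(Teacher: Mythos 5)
Your high-level plan --- reduce to typical $n$ (no large squarefull part, no large prime dividing $\gcd(n,\sigma(n))$, no two multiplicatively close large prime factors, large $P^+(n)$), factor out the largest prime $P$, treat small moduli by pinning $P$ to a residue class, and treat large moduli via Wirsing --- matches the skeleton of the paper. But the heart of the large-modulus argument has a gap that your proposal does not close.

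To invoke Wirsing's theorem you need two candidate cofactors $m_1,m_2$ (for the same $P$) to satisfy $\sigma(m_1)/m_1=\sigma(m_2)/m_2$ as an exact rational identity, not merely a congruence mod $p^k$. Cross-multiplying $\sigma(m_1)/m_1\equiv\sigma(m_2)/m_2\pmod{p^k}$ only forces equality when $\sigma(m_1)m_2$ and $m_1\sigma(m_2)$ are both below $p^k$, i.e.\ roughly $m_i<p^{k/2}$. In your parametrization $m=n/P^+(n)$, and even under the favorable assumption $P^+(n)>\sqrt n$ you only get $m\lesssim\sqrt x$, which vastly exceeds $p^{k/2}$ across essentially all of the range $x^\epsilon<p\le(x\log_2 x)^{1/k}$ you are targeting. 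So the congruence does not determine the rational value $\sigma(m)/m$, and Wirsing cannot be applied per residue class. This is precisely the obstacle the paper's proof of Theorem \ref{thm:divthm} is designed to overcome: it first dispatches the easy regime $P^+(n)\ge d^{1/4}(\log x)^2$ by counting primes in a residue class, and then, in the complementary regime, abandons the crude $n=mP$ split in favor of a factorization $n=AB$ where $A$ is chosen as the smallest unitary squarefree divisor of $n/P^+(n)$ exceeding $d^{1/4}\exp(\tfrac{\lambda}{2}\log x/\log_2 x)$. The structural conditions on $n$ force $A\le d^{1/2}\exp(-\tfrac{\lambda}{2}\log x/\log_2 x)$ and hence $\sigma(A)<d^{1/2}$, which is exactly the size control needed to upgrade the congruence $\sigma(A)/A\equiv B/\sigma(B)\pmod d$ to an equality of $\sigma(A)/A$ across admissible $A$, after which Wirsing and the complementary bound on $B$ finish. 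Without some analogue of this ``Goldilocks'' choice of $A$, your budget in the Wirsing step does not close, and the threshold $k\ge4$ has no derivation. A secondary issue is that the positive-density set of $n$ with $P^+(n)\le\sqrt n$ is flagged but not actually handled; the paper avoids needing $P>m$ at all, requiring only $P^+(n)>x^{1/10\log_3 x}$ (a full-density condition), which changes how the small-modulus range must be estimated.
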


We split the proof of Proposition \ref{prop:main} into two parts, according to the size of $p$.

\subsection{\dots when $y < p^k \le x^{1/2\log_3{x}}$} The following is a weakened form of Lemma 2.8 from \cite{pollack14}.

\begin{lem}\label{lem:pollack} For all large $x$, there is a set $\mathcal{E}(x)$ having size $o(x)$, as $x\to\infty$, such that the following holds. For all $d\le x^{1/2\log_3 x}$, the number of $n\le x$ not belonging to $\mathcal{E}(x)$ for which $d\mid s(n)$ is $O(x/d^{0.9})$.
\end{lem}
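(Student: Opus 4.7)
My approach is to split into two regimes by the size of $d$. For small $d$, I will exploit that $\sigma(n)$ is divisible by every small integer for almost all $n$ (Lemma~\ref{lem:easylem}); for larger $d$, I will factor off $P := P^+(n)$ and apply Brun--Titchmarsh to a resulting arithmetic-progression condition on $P$.

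Concretely, I would set $q_0 := x^{1/(2\log_3 x)}$ and take $\mathcal{E}(x)$ to comprise (a) the $q_0$-smooth $n \le x$ (of which there are $o(x)$ by de Bruijn), (b) the $n \le x$ with $P^+(n)^2 \mid n$ (contributing $\ll x/(q_0\log q_0)=o(x)$), and (c) the $o(x)$ exceptional set provided by Lemma~\ref{lem:easylem} with, say, $\epsilon=\tfrac{1}{2}$. For $n \notin \mathcal{E}(x)$ one has $n = Pm$ with $P := P^+(n) > q_0$ and $\gcd(P,m)=1$, and the multiplicativity of $\sigma$ gives
\[
  s(n)\;=\;(P+1)\sigma(m) - Pm \;=\; m + (P+1)\,s(m).
\]
Thus $d \mid s(n)$ is the same as $P\,s(m) \equiv -\sigma(m) \pmod d$. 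Since $P > q_0 \ge d$ is automatically coprime to $d$, writing $g := \gcd(s(m), d)$ the congruence is solvable iff $g \mid m$, in which case it confines $P$ to a single residue class $a_m \pmod{d/g}$.

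For $d \le (\log_2 x)^{1/2}$, property (c) forces $d \mid \sigma(n)$, so $d \mid s(n) \iff d \mid n$ and the count is trivially at most $x/d \le x/d^{0.9}$. For larger $d$ I would sum
\[
  \#\{n \le x,\ n \notin \mathcal{E}(x),\ d \mid s(n)\}\;\le\;\sum_{g\mid d}\ \sum_{\substack{m \le x/q_0 \\ g \mid m,\ g \mid s(m)}}\pi(x/m;\,d/g,\,a_m).
\]
Brun--Titchmarsh controls $m \le xg/(2d)$, yielding $(x/m)/(\phi(d/g)\log(gx/(md)))$, while in the complementary range at most one prime lies in each AP, the corresponding $m$-count summing to $\ll \tau(d)\,x/q_0 = o(x/d^{0.9})$. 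Combining $\sum_{g\mid m,\ m \le M} 1/m \ll (\log M)/g$ with the integral estimate $\sum_m 1/(m\log(x/(md/g))) \ll \log_2 x$, the Brun--Titchmarsh contribution is of order $x(\log_2 x)\tau(d)\log\log d/d$. Since $\tau(d) = d^{o(1)}$ throughout $d \le x^{1/(2\log_3 x)}$, this is $\ll x/d^{0.9}$ once $d$ exceeds a suitable polylogarithmic threshold.

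\textbf{The hard part.} The main difficulty will be securing the bound $O(x/d^{0.9})$ with an \emph{absolute} implied constant, uniformly for all admissible $d$. The equidivisibility argument reaches only $d \le (\log_2 x)^{1-\epsilon}$, whereas Brun--Titchmarsh needs $d$ to comfortably exceed a power of $\log_2 x$ for the $d^{0.1}$ slack to dominate the $\tau(d)\log_2 x\log\log d$ losses. Closing the intermediate gap---perhaps by strengthening Lemma~\ref{lem:easylem} so as to cover a larger range of $d$ (while keeping $|\mathcal{E}(x)|=o(x)$), or by replacing Brun--Titchmarsh by an averaged estimate such as Bombieri--Vinogradov---is the delicate heart of the proof, and determines how the $d^{0.1}$ slack gets apportioned.
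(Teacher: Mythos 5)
The paper does not actually prove this lemma; it is quoted verbatim (in weakened form) from Lemma~2.8 of \cite{pollack14}, so there is no in-text proof to compare against. Evaluating your proposal on its own terms: the high-level strategy—factor $n=Pm$ with $P=P^+(n)$ forced to exceed $x^{1/(2\log_3 x)}$, convert $d\mid s(n)$ into a congruence $P\,s(m)\equiv -\sigma(m)\pmod d$ on the single prime $P$, and apply Brun--Titchmarsh—is the standard device for statements of this type, and your treatment of the two extreme regimes is sound. For $d\le(\log_2 x)^{1/2}$, Lemma~\ref{lem:easylem} does give $d\mid\sigma(n)$ outside the exceptional set, reducing $d\mid s(n)$ to $d\mid n$ with the trivial bound $x/d\le x/d^{0.9}$. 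At the other end, for $m<xg/(2d)$ the Brun--Titchmarsh estimate does yield a contribution $\ll x(\log_2 x)\tau(d)\log\log d/d$ after summing $g\mid d$, and the complementary $m\ge xg/(2d)$ contributes $\ll\tau(d)x/q_0$, which is negligible since $d\le q_0$.

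However, you have correctly diagnosed that this leaves a genuine gap, and the gap is not cosmetic. The inequality $(\log_2 x)\tau(d)\log\log d\ll d^{0.1}$ forces $d\gg(\log_2 x)^{10+o(1)}$, while the equidistribution argument cannot be pushed past $d\approx\log_2 x$: Lemma~\ref{lem:pomerance} gives an exceptional set of size $x\exp(-(\log_2 x)/\phi(d))$, and once $\phi(d)\gtrsim\log_2 x$ this is no longer $o(x)$, so no choice of $\epsilon$ in Lemma~\ref{lem:easylem} reaches $d\gg\log_2 x$. Thus for $d$ in the window roughly $\log_2 x<d<(\log_2 x)^{10}$ neither of your two mechanisms applies, and the lemma's claim of uniformity over \emph{all} $d\le x^{1/(2\log_3 x)}$ is not established. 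Your suggestions for closing the window are also problematic: Bombieri--Vinogradov gives a bound only on average over moduli, whereas the lemma demands a bound valid for each individual $d$, so it does not directly help. Closing this window requires a different idea—for instance, a more refined argument exploiting the multiplicative structure of $d$ (treating one prime power $p^a\parallel d$ at a time and using Lemma~\ref{lem:pomerance} for those prime powers individually, rather than applying Lemma~\ref{lem:easylem} to $d$ as a whole), or a sharper version of the Brun--Titchmarsh computation that avoids the $\log_2 x$ loss coming from $\sum_m 1/(m\log(x/(md)))$. As written, the proof is incomplete in precisely the range you flagged.

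A smaller quibble: in the complementary range $m\ge xg/(2d)$ each progression can contain up to three integers $\le x/m$, not just one; this does not affect the order of magnitude, but the phrase ``at most one prime'' should read ``$O(1)$ primes.''
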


Summing the bound of Lemma \ref{lem:pollack} over $d=p^k$ with $y < p^k \le x^{1/2\log_3{x}}$ gives $o(x)$. It follows that almost always, $s(n)$ is not divisible by $p^k$ for any $p^k \in (y,x^{1/2\log_3 x}]$.

\subsection{\dots when $p^k > x^{1/2\log_3{x}}$}


The treatment of this range of $p$ is based on the following result, which may be of independent interest.

\begin{thm}\label{thm:divthm} For all large $x$, there is a set $\mathcal{E}(x)$ having size $o(x)$, as $x\to\infty$, such that the following holds. The number of $n\le x$ not belonging to $\mathcal{E}(x)$ for which $d\mid s(n)$ is
\[ \ll \frac{x}{d^{1/4}\log x} \]
uniformly for positive integers $d > x^{1/2\log_3 x}$ satisfying $P^{-}(d) > \log_2 x$.\end{thm}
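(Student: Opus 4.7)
After discarding an exceptional set of size $o(x)$ by combining Lemmas~\ref{gcd(n,sigma(n)) log log x smooth} and~\ref{MultiplicativelyCloseLargestandSecondLargestPrimeDivisors} with standard estimates for smooth numbers and for $n$ with large squarefull part, the surviving $n$ can be written as $n = Pm$ where $P = P^{+}(n)$ is the largest prime factor, $P\|n$, $P > x^{1/(10\log_3 x)}$, $P^{+}(m) < Px^{-1/(\log_3 x)^2}$, and $\gcd(m, d) = 1$. (The coprimality holds because any common prime $p$ of $n$ and $d$ would, under $d\mid s(n)$, also divide $\sigma(n)$, contradicting that $\gcd(n,\sigma(n))$ has no prime factor exceeding $\log_2 x < P^{-}(d)$.)

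The relation $d\mid s(n) = (P+1)\sigma(m) - Pm$ rearranges to $P(\sigma(m)-m)\equiv -\sigma(m)\pmod d$. Using $\gcd(m,d) = 1$, a short check gives $\gcd(\sigma(m)-m, d) = 1$, so $P$ is forced into a single residue class $r_m\pmod d$; moreover $r_m$ depends on $m$ only through the value of $\sigma(m)/m$, as the congruence can be recast purely in terms of the reduced form $a/b$ of that rational. Next I would separate the count of pairs $(P,m)$ according to the size of $m$. When $m\le x/(d\log^2 x)$, Brun--Titchmarsh yields $\ll (x/m)/(\phi(d)\log(x/md))$ primes $P\le x/m$ in this AP; summing over $m$ with Mertens and $\phi(d)\gg d/\log_2 d$ gives a contribution $\ll x(\log_2 x)^3/d$, which is comfortably within the target for $d > x^{1/(2\log_3 x)}$.

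The hard range is $m > x/(d\log^2 x)$, where $[1, x/m]$ admits at most $O(1)$ primes in any AP modulo $d$, so each $m$ contributes at most one $n$. Here Wirsing's theorem (Lemma~\ref{Wirsing}) is the decisive tool: since $r_m$ is a function of $\sigma(m)/m$ alone, the surviving $m$'s can be organized by the value of this abundancy index, with Wirsing bounding each fiber by $\exp(\lambda_0 \log x/\log_2 x)$. The compatible values $\sigma(m)/m = a/b$ in lowest terms satisfy $(P+1)a\equiv Pb\pmod d$ for some prime $P$ in the permitted range, along with $b\ll x/P$ and $1 < a/b \ll \log_2 x$; combining a combinatorial count of these $(a,b)$ pairs with the Wirsing multiplier yields the bound $\ll x/(d^{1/4}\log x)$. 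The principal obstacle will be this large-$m$ case: one must control the number of admissible abundancy indices tightly enough that the exponent $1/4$ in $d^{1/4}$ emerges naturally, and balancing the Wirsing input against the combinatorics of the linear congruence is, I suspect, the main innovation in the argument.
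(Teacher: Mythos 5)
Your setup (exceptional set combining Lemmas~\ref{gcd(n,sigma(n)) log log x smooth} and \ref{MultiplicativelyCloseLargestandSecondLargestPrimeDivisors} with smoothness and squarefull-part restrictions) matches the paper's conditions (1)--(6), and your treatment of small $m$ --- write $n = Pm$, force $P$ into one residue class mod $d$, count primes --- is essentially the paper's argument for the range $P^{+}(n) \ge d^{1/4}(\log x)^2$ (the paper uses the cruder $\ll x/(md) + 1$ in place of Brun--Titchmarsh, which already suffices). You have also correctly identified both that Wirsing is the decisive tool for the remaining range and that the congruence data depends on $n$ only through an abundancy index. So far, so good.

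The gap is exactly where you flag it, and it is real: organizing the large-$m$ values by $\alpha = \sigma(m)/m$ and then ``counting admissible $(a,b)$'' does not close. The trouble is that $m$ can be as large as $x^{1-o(1)}$, so the numerator and denominator of $\sigma(m)/m$ in lowest terms can themselves be nearly of size $x$; the constraints $1 < a/b \ll \log_2 x$ and $(P+1)a \equiv Pb \pmod{d}$ then leave an enormous family of $(a,b)$ (essentially, fixing $a$ still leaves $b$ free in an arithmetic progression mod $d$ up to a huge bound), and no reasonable combinatorial count brings in the factor $d^{-1/4}$. The point you are missing is that you should \emph{not} feed $\sigma(m)/m$ to Wirsing, because $m$ is too big to have a ``small'' abundancy fraction. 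The paper instead factors $n = AB$ where $A$ is a unitary squarefree divisor of $n/P^{+}(n)$ chosen to be the smallest one exceeding $d^{1/4}\exp\bigl(\tfrac{\lambda}{2}\tfrac{\log x}{\log_2 x}\bigr)$; the conditions in $\mathcal{E}(x)$ guarantee such an $A$ exists and that $A \le d^{1/2}\exp\bigl(-\tfrac{\lambda}{2}\tfrac{\log x}{\log_2 x}\bigr)$, hence $\sigma(A) < d^{1/2}$. The congruence then reads $\sigma(A)/A \equiv B/\sigma(B) \pmod d$, and --- this is the point --- because $\sigma(A_1), \sigma(A_2) < d^{1/2}$, two solutions $A_1, A_2$ for the same $B$ force $\sigma(A_1)A_2 = A_1\sigma(A_2)$ (the difference is $< d$ in absolute value yet $\equiv 0 \pmod d$), i.e. the abundancy of $A$ is \emph{uniquely determined} by $B$. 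Now Wirsing bounds the number of $A$ per $B$ by $\exp(\lambda_0 \log x/\log_2 x)$, and the trivial size bound $B \le x d^{-1/4}\exp(-\tfrac{\lambda}{2}\tfrac{\log x}{\log_2 x})$ bounds the number of $B$; since $\lambda > 2\lambda_0$, the product is $\ll x/(d^{1/4}\log x)$. So the decisive structural idea is to split off a \emph{medium-sized} unitary squarefree divisor $A$ whose $\sigma$ stays below $d^{1/2}$ --- not to work with $\sigma(m)/m$ for the full cofactor $m$, which is the route your plan takes and which stalls for the reason above.
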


The crucial advantage of Theorem \ref{thm:divthm} over Lemma \ref{lem:pollack} is the lack of any restriction on the size of $d$. Since $k\ge 4$, when we sum the bound of Theorem \ref{thm:divthm} over $d=p^k$ with $x^{1/2\log_3 x} < p^k < x^2$, the result is $O(x \log_2 x/\log{x})$, which is $o(x)$. So the proof of Theorem \ref{thm:main} will be completed once Theorem \ref{thm:divthm} is established. 

Turning to the proof of Theorem \ref{thm:divthm}, let $\mathcal{E}(x)$ denote the collection of $n \leq x$ for which at least one of the following fails:
\begin{enumerate}
    \item[(1)] $n> x/\log x$,
    \item[(2)] the largest squarefull divisor of $n$ is at most $\log_2 x$, 
    \item[(3)] $P^+(n) > x^{1/10\log_3 x}$,
    \item[(4)] $P^+(n)^2 \nmid n$,
    \item[(5)] $P^+(\gcd(n, \sigma(n))) \le \log_2 x$,
    \item[(6)] $P^+(n)>P_2^+(n)x^{1/(\log_3 x)^2}$, where $P_2^{+}(n):=P^{+}(n/P^{+}(n))$ is the second-largest prime factor of $n$.
\end{enumerate}

Let us show that only $o(x)$ integers $n\le x$ fail one of (1)--(6). This is obvious for (1). The count of $n\le x$ failing (2) is $\ll x \sum_{r > \log_2 x,~r\text{ squarefull}} 1/r \ll x/\sqrt{\log_2 x}$, and thus is $o(x)$. That the count of $n\le x$ failing (3) is $o(x)$ follows from standard bounds on the counting function of smooth (friable) numbers (e.g., Theorem 5.1 on p.\ 512 of \cite{tenenbaum15}), or Brun's sieve. The set of $n \leq x$ passing (3) but failing (4) has cardinality $\ll x \sum_{r>x^{1/10\log_3 x}}1/r^2 = o(x)$. Condition (5) is handled by Lemma \ref{gcd(n,sigma(n)) log log x smooth}. That the count of $n\le x$ satisfying (1)--(5) and failing (6) is $o(x)$ follows from Lemma \ref{MultiplicativelyCloseLargestandSecondLargestPrimeDivisors}.

Let $d$ be as in Theorem \ref{thm:divthm}. We separate the count of $n\notin \mathcal{E}(x)$ for which $d\mid s(n)$ according to whether $P^+(n) < d^{1/4} (\log x)^2$ or $P^+(n) \geq d^{1/4} (\log x)^2$. 

We first consider $n\notin \mathcal{E}(x)$ with $P^{+}(n) \ge d^{1/4}(\log{x})^2$.
Write $n=mP$, where $P:=P^{+}(n)$. Then $\gcd(m,P)=1$, and 
\[ x/m \ge d^{1/4} (\log{x})^2. \]
We can rewrite the condition $d\mid s(n)$ as $$ Ps(m) \equiv -\sigma(m) \pmod d.$$ For this congruence to have solutions, we must have $\gcd(s(m)\sigma(m), d)=1$. Indeed, if there exists a prime $q$ dividing both $\sigma(m)$ and $d$, then from $q\mid d$, we have $q> \log_2 x$, whereas since $d \mid s(n)$, we also have $q\mid s(n)$. But then the divisibility $q \mid \sigma(m)\mid \sigma(n)$ leads to $q \mid \gcd(n, \sigma(n))$, contradicting condition (5) above.  Since any common prime divisor of $s(m)$ and $d$ would, by the congruence, have to divide $\sigma(m)$ as well, we must indeed have $\gcd(s(m)\sigma(m), d)=1$. 

As such, the above congruence condition on $P$ places it in a unique coprime residue class modulo $d$. Hence, given $m$, the number of possible $P$ (and hence possible $n=mP$) is 
$$\ll \frac x{md} + 1 \ll \frac x{md} + \frac x{md^{1/4} (\log x)^2},$$
which when summed over $m \leq x$ is $\ll x/{d^{1/4} \log x}$, consistent with Theorem \ref{thm:divthm}. (We use here the lower bound on $d$.)

It remains to count $n\le x$, $n\notin \mathcal{E}(x)$ where $d\mid s(n)$ and $P^{+}(n) < d^{1/4}(\log x)^2$. For this case, we fix a constant
\[ \lambda> 2\lambda_0, \]
where $\lambda_0$ is the constant appearing in Wirsing's bound (Lemma \ref{Wirsing}). We will assume that $d\le x^{3/2}$, since $s(n) \le \sigma(n) < x^{3/2}$ for all $n\le x$, once $x$ is sufficiently large (e.g., as a consequence of the bound $\sigma(n) \ll n \log\log{(3n)}$; see Theorem 323 in \cite{Hw08}).

We write $n=AB$, where $A$ is the least unitary squarefree divisor of $n/P^+(n)$ exceeding $d^{1/4}\exp\left(\frac{\lambda}2 \frac{\log x}{\log_2 x}\right)$. Such a divisor exists as $n>x/\log x$ has maximal squarefull divisor at most $\log_2 x$, whereupon its largest unitary squarefree divisor coprime to $P^+(n)$ must be no less than 
$$\frac 1{d^{1/4} (\log x)^2 \log_2 x} \cdot \frac x{\log x} > d^{1/4} \exp\left(\frac{\lambda}2 \frac{\log x}{\log_2 x}\right).$$
(We assume throughout this argument that $x$ is sufficiently large.) Then \begin{equation}\label{eq:Bupper} B \leq \frac xA \leq \frac x{d^{1/4}}\exp\left(-\frac{\lambda}2 \frac{\log x}{\log_2 x}\right).\end{equation} Furthermore, 
\[ P^{+}(A) \le P_2^+(n)< P^+(n)x^{-1/(\log_3 x)^2} < d^{1/4} (\log x)^2 x^{-1/(\log_3 x)^2} < d^{1/4}x^{-\lambda/\log_2 x}. \]  Since $A/P^{+}(A)$ is a unitary squarefree divisor of $n/P^{+}(n)$, to avoid contradicting the choice of $A$, we must have $A \leq d^{1/2}\exp\left(-\frac{\lambda}2 \frac{\log x}{\log_2 x}\right)$. Then $\sigma(A) \ll A \log\log{A} \ll A\log_2 x$, so that (for large $x$) $\sigma(A) < d^{1/2}$.  

For each $B$ as above, we bound the number of corresponding $A$. First of all, since $\gcd(A, B)=1$, the divisibility $d\mid s(n)$ translates to the congruence $\sigma(A)\sigma(B) \equiv AB \pmod d$. Now, we claim that $\gcd(A\sigma(B), d)=1$: indeed, for any prime $q$ dividing both $A$ and $d$, we must have, on one hand, $q \geq P^-(d)>\log_2 x$, while on the other, $q\mid d \mid s(n)$ and $q \mid A \mid n$ imply $q\mid \gcd(n, \sigma(n))$. This contradicts (5). It follows by an analogous argument that $\gcd(\sigma(B), d)=1$, thus proving our claim. Consequently, the above congruence may be rewritten as 
$$\frac{\sigma(A)}A \equiv \frac B{\sigma(B)} \pmod d.$$
Now for some $B$, consider any pair of squarefree integers $A_1$ and $A_2$ satisfying the above congruence along with the conditions $\sigma(A_1), \sigma(A_2) < d^{1/2}$. Then $\sigma(A_1)/A_1 \equiv \sigma(A_2)/A_2 \pmod d$, leading to $\sigma(A_1)A_2 \equiv A_1\sigma(A_2) \pmod d$. But also $$|\sigma(A_1)A_2 - A_1\sigma(A_2)| \leq \max\{\sigma(A_1)A_2, A_1\sigma(A_2)\} < d,$$
thereby forcing $\sigma(A_1)/A_1 = \sigma(A_2)/A_2$. This shows that for each $B$, all corresponding $A$  have $\sigma(A)/A$ assume the same value, whereupon Lemma \ref{Wirsing} bounds the number of possible $A$  by $\exp\left(\lambda_0 \frac{\log x}{\log_2 x}\right)$. Keeping in mind the upper bound \eqref{eq:Bupper} on $B$, we deduce that the number of $n$ falling into this case is at most 
$$\frac x{d^{1/4}}\exp\left(-\frac{\lambda}2 \frac{\log x}{\log_2 x}\right) \cdot \exp\left(\lambda_0 \frac{\log x}{\log_2 x}\right) = \frac x{d^{1/4}} \exp\left(\left(\lambda_0 - \frac{\lambda}2\right)\frac{\log x}{\log_2 x}\right).$$
Since $\lambda > 2\lambda_0$, this final quantity is $\ll x/d^{1/4}\log{x}$. This completes the proof of Theorem \ref{thm:divthm}, and so also that of Theorem \ref{thm:main}.

\begin{rmk}\label{rmk:EGPS} Erd\H{o}s, Granville, Pomerance, and Spiro have conjectured \cite[Conjecture 4]{EGPS90} that $s^{-1}(\mathcal{A})$ has density $0$ whenever $\mathcal{A}$ has density $0$. If this holds, then the conclusion of Proposition \ref{prop:main} follows for each $k\ge 2$: take $$\mathcal{A} =\{ n\text{ divisible by $p^k$ for some $p^k > \log_3(100n)$}\}.$$ Unfortunately, very little is known in the direction of the EGPS conjecture. The record result (still quite weak) seems to be that of \cite{PPT18}, where it is shown that $s^{-1}(\mathcal{A})$ has density $0$ whenever $\mathcal{A}$ has counting function bounded by $x^{1/2+o(1)}$, as $x\to\infty$.
\end{rmk}
\providecommand{\bysame}{\leavevmode\hbox to3em{\hrulefill}\thinspace}
\providecommand{\MR}{\relax\ifhmode\unskip\space\fi MR }
\providecommand{\MRhref}[2]{%
  \href{http://www.ams.org/mathscinet-getitem?mr=#1}{#2}
}
\providecommand{\href}[2]{#2}

\end{document}